\documentclass[12pt,notitlepage]{article}
\usepackage{amsmath}
\usepackage{amsthm}
\usepackage{amssymb}
\usepackage{amscd}
\usepackage{amsfonts}
\usepackage{amsbsy}
\usepackage{epsfig,afterpage}
\usepackage[dvips]{psfrag}
\usepackage{psfrag}
\usepackage[all]{xy}
\usepackage{color}
\usepackage{multirow}
\usepackage{pdflscape}
\usepackage{pict2e} 
\usepackage{array}
\usepackage{comment}

\usepackage{caption}
\usepackage{xparse}
\usepackage{subcaption}
\usepackage[colorinlistoftodos]{todonotes}

\usepackage{mathtools, nccmath}

\newcommand{\floor}[1]{\left\lfloor #1 \right\rfloor}
\newcommand{\ceil}[1]{\left\lceil #1 \right\rceil}
\newcommand{\R}{\ensuremath{\mathbb{R}}}

\newcommand{\N}{\ensuremath{\mathbb{N}}}

\newcommand{\Z}{\ensuremath{\mathbb{Z}}}

\DeclareGraphicsExtensions{.eps}

\newcommand{\calH}{{\mathcal H}}
\newcommand{\calM}{{\mathcal M}}
\newcommand{\calF}{{\mathcal F}}
\usepackage{overpic}

\usepackage{todonotes}

\newtheorem {theorem} {Theorem}
\newtheorem {proposition}  {Proposition}

\newtheorem {definition} {Definition}
\newtheorem {remark} {Remark}
\newtheorem {example} {Example}
\newtheorem {corollary} {Corollary}

\AtBeginDocument{%
	
}

\usepackage{lineno,hyperref}
\modulolinenumbers[5]

\usepackage{geometry}
\usepackage{authblk}

\providecommand{\keywords}[1]
{
  \small	
  \textbf{\textit{Keywords: }} #1
}

\begin{document}

\title{Darboux-type center conditions for families of planar polynomial vector fields.}
\author[1]{Yovani Villanueva\footnote{Corresponding author.}}
\author[2]{Warwick Tucker}
\affil[1]{Universidade Federal de Goi\'as, IME, Goi\^ania, Brasil.}
\affil[2]{Monash University, School of Mathematics, Victoria, Australia.}

\maketitle	
\begin{abstract}
We study the center-focus problem for planar polynomial vector fields, which can be viewed as a local version of Hilbert's 16th problem. Based on a Lyapunov function approach, we establish novel results regarding the center-focus conditions for two families of differential systems. More precisely, we find an enclosure of the Bautin ideal generated by the Lyapunov constants of these systems. Our results hold for any degree $n\ge 2$.
\end{abstract}

\keywords{Lyapunov function, Center-focus, Lyapunov constant, Darboux center, Invariant algebraic curve.}

\section{Introduction}\label{sec:1}
	

A fundamental problem in the theory of Dynamical Systems is Hilbert's 16th problem, posed by David Hilbert \cite{H} at the International Congress of Mathematics held in Paris in 1900. The second part of Hilbert's 16th  problem asks for the \emph{Hilbert number} $\calH(n)$ -- the maximal number of limit cycles (isolated periodic orbits) the family of planar polynomial ordinary differential equations of degree $n$ can display. Note that $\calH(n)$ should only depend on the degree $n$, not on the particular polynomial vector field itself. This question is unresolved, even for the simplest case $n = 2$. Even finding non-trivial lower bounds for $\calH(n)$ appears to be very hard \cite{TG,SS,YLE}.
	
A local version of Hilbert's 16th problem is known as the \emph{center-focus} problem. Here, vector fields whose linearizations have an equilibrium point of center type are studied, and the challenge is to find necessary and sufficient conditions (\textit{center conditions}) on the nonlinear parts of the vector field in order for the equilibrium to remain a center. When moving away from the parameters satisfying the center conditions, the equilibrium turns into a (weak) focus, and it then makes sense to ask how many small amplitude limit cycles can be generated near the equilibrium \cite{AL,MS}. This is known as the \emph{cyclicity} problem, and the maximal number of such limit cycles for degree-$n$ systems is denoted $\calM(n)$. A classical result of Bautin \cite{B,Y} states that for the class of quadratic vector fields, the maximal number of small amplitude limit cycles is three: $\calM(2) = 3$. For cubic systems without quadratic terms (e.g. homogeneous systems of degree three) \.Zol\c adek \cite{HZ,Z} proved that the maximal number of small amplitude limit cycles is five: $\calM_h(3) = 5$. Other results include lower bounds on $\calM(n)$ for $3\le n\le 10$ provided by Gouvea and Torregrosa \cite{GT}.

A common approch to these problem is to study the displacement map\footnote{The displacement map is the return map minus the identity: $d(r) = \pi(r) - r$. A zero of $d$ corrresponds to a periodic orbit of the flow. $d(r) = 0$ for all $0 \le r < r_0$ corresponds to a center.} along a radial segment originating at the fixed point. Expressed as a power series, its coefficients are polynomials in the parameters of the differential system, and are known as \emph{focal values} or \emph{Lyapunov constants}, denoted $L_j$ $(j = 1,2,\dots)$. For a given family $\calF$ of polynomial differential systems, we can form its \emph{Bautin ideal} $\mathfrak{B}(\calF) = \left< L_1, L_2, \dots \right>$ in the ring of polynomials over the set of parameters of $\calF$. The structure and size of $\mathfrak{B}(\calF)$ plays an important role in the centre-focus and cyclicity problems for the family $\calF$.


	

The goal of this paper is to establish new results regarding the Bautin ideal, valid for two important families of degree-$n$ polynomial differential equations $\calF(n)$ and $\calF_h(n)$. Our results hold for any degree $n\ge 2$, and allows us to explore some interesting classes of \emph{Darboux-type} centers. In concrete terms, we will express the Lyapunov constants in terms of a carefully selected basis. This basis allows us to give a detailed description of a set of sufficient center conditions for our two families. Our main results can be summarized as follows.
	
\textbf{Main Result:} {\it For $\calF(n)$ -- the class of (non-homogeneous) differential systems of degree $n$ -- we have:
\begin{equation*}
\mathfrak{B}(\calF(n)) = \left< L_1, L_2, \dots \right> \subset \left<v^h_3, v^h_4, \dots, v^h_{n+1} \right>.
\end{equation*}	
For $\calF_h(n)$ -- the class of homogeneous differential systems of degree $n$ -- we have:
$$
\mathfrak{B}(\calF_h(n)) = \left< L_1, L_2, \dots \right> \subset
\left\{
\begin{array}{ll}
\left< v_{n+1,0}, v_{n,1}, \dots, v_{0,n+1} \right> & \text{ if $n$ is even,}\\
\left< L_{(n-1)/2},v_{n+1,0}, v_{n,1}, \dots, v_{0,n+1} \right> & \text{ if $n$ is odd.}
\end{array}
\right. 
$$
}

The exact definitions of two families $\calF(n)$ and $\calF_h(n)$ are given by \eqref{eq:non_homogeneous_system} and \eqref{eq:homogeneous_system}, respectively. The basis elements $v^h_i$ ($i = 3,\dots, n+1$) and $v_{n+1-j,j}$ ($j = 0,\dots, n+1$) are defined by \eqref{eq_homogeneous_V_terms} and \eqref{eq:homogeneous_lyapunov_term}, respectively. An important feature of the basis elements is that they depend \emph{linearly} on the coefficients of the families under study.

The key strength of this theorem is that it gives a simple recipie for forcing all Lyapunov constants to vanish. As a consequence, we have a straight-forward way of creating a special class of centers.

\section{Preliminaries}\label{sec:2}

We will consider $\calF(n)$ -- the family real-valued planar polynomial vector fields, where the linearization at the origin describes a center, and the nonlinearities are polynomials of degree $n$. Any member of $\calF(n)$ is of the form:
\begin{equation}\label{eq:non_homogeneous_system} 
\left\{
\begin{array}{l} \displaystyle
\dot{x}=-y+\sum_{k=2}^{n}F_k(x,y),\\  \displaystyle
\dot{y}=\phantom{-}x+\sum_{k=2}^{n}G_k(x,y),
\end{array}
\right.
\end{equation}
Here the nonlinearities are represented as sums of homogeneous polynomials
\begin{equation*}
F_k(x,y)=\sum_{i+j=k}f_{i,j}x^{i}y^{j} 
\quad\textrm{ and }\quad 
G_k(x,y)=\sum_{i+j=k}g_{i,j}x^{i}y^{j}, \qquad k=2,\dots,n.
\end{equation*}
Note that the coefficients are real numbers (and not complex, which is the natural choice in other settings). We sometimes suppress the variables $x,y$ and simply write $F_k$ and $G_k$. Also, for simplicity, we will write $(f,g)$ for the set of parameters. Note that the number of parameters of \eqref{eq:non_homogeneous_system} is $n^2+3n-4$.

Determining the stability of the origin of a system \eqref{eq:non_homogeneous_system} is known as the center-focus problem. One common approach is to seek to construct a first integral around the origin. The existence of a first integral implies that any Lyapunov function (expressed in suitable coordinates) is analytic in $x^2 + y^2$. Motivated by this, we may propose $V(x,y) = \tfrac{1}{2}(x^2 + y^2) + \mathcal{O}(\|(x,y)\|^3)$ as a Lyapunov function. We can express it in its homogeneous series $\displaystyle V=V_{2}+V_{3}+V_{4}+\cdots$, with $\displaystyle V_{2}(x,y)=\tfrac{1}{2}(x^2 + y^2)$ and 
\begin{equation}\label{eq:homogeneous_lyapunov_term}
V_k(x,y) = \sum_{i+j=k} v_{i,j}x^{i}y^{j},\qquad k \geq 3. 
\end{equation} 
Along solutions of \eqref{eq:non_homogeneous_system} the formal series expansion for the derivative of $V$ (also analytic in $x^2 + y^2$) is	
\begin{eqnarray}\label{eq:lyapunov_formula}
\dot{V}(x,y) &=& \phantom{+}((V_2)'_x+(V_3)'_x+\cdots)(-y+F_{2}+F_{3}+\cdots + F_n) \nonumber\\
		& & +((V_2)'_y+(V_3)'_y+\cdots)(\phantom{-}x+G_{2}+G_{3}+\cdots +G_n) \nonumber\\
		&=& L_{1}(x^{2}+y^{2})^{2}+L_{2}(x^{2}+y^{2})^{3}+L_{3}(x^{2}+y^{2})^{4}+\cdots.
\end{eqnarray}
		
Equation \eqref{eq:lyapunov_formula} is known as the \emph{Lyapunov formula} for \eqref{eq:non_homogeneous_system}, and the coefficients $L_j$ are called \emph{Lyapunov constants}, or sometimes \textit{focal values}. Both names are somewhat deceptive; for a family of differential systems, the Lyapunov constants are in fact polynomials in the underlying coefficients: $L_j\in\R [f,g]$ (see e.g. \cite{RS}).

We can interpret the Lyapunov formula \eqref{eq:lyapunov_formula} as an infinite set of linear systems of equations; one for each degree. The unknowns to be determined\footnote{Under the assumptions $v_{(2m),(2m)}=0$ or $v_{(2m),(2m+2)} = 0$ for all $m$, the linear systems of equations have unique solutions, see \cite{DL,LK,HZ}.} are the Lyapunov constants $L_j$ for $j \ge 1$ and the $k+1$ coefficients of each homogeneous term $V_k$ for $k \geq 3$. If \emph{all} Lyapunov constants vanish, then $\dot{V}(x,y)$ is identically zero in a neighborhood of the origin, and the origin is a \emph{center}. Otherwise, if $L_1 = \dots = L_{K-1} = 0$ and $L_K\neq 0$ for some $K\in\Z^+$, the origin of \eqref{eq:non_homogeneous_system} is a \emph{weak focus} (of order $K$); its stability is given by the sign of $L_K$. 

By carefully selecting the parameters $(f,g)$ of the differential system \eqref{eq:non_homogeneous_system}, we can force all Lyapunov constants to vanish:
\begin{definition}
A \emph{center condition} is a finite set of algebraic relations of the parameters  $(f,g)$ of the differential system \eqref{eq:non_homogeneous_system} needed to ensure that $L_j = 0$ for all $j \in \Z^+$ in the Lyapunov formula \eqref{eq:lyapunov_formula}. 
\end{definition}
Normally, a center condition can be expressed as an ordered collection of algebraic equations in terms of the parameters of the vector field. The first condition ensures that $L_1 = 0$; the second condition makes $L_2 = 0$ (given that $L_1=0$), and so on. These are called \emph{partial} center conditions; each one increases the weakness of the focus at the origin. When they are all satisfied, the fixed point at the origin is a center; there are no small amplitude limit cycles. The set of all possible center conditions describes the \emph{center variety} of $\calF$. Depending on the family $\calF$ of differential systems, this can be composed of several branches; each corresponding to a particular type of center, see \cite{CC,DL,RS}.
	
\section{Lyapunov Theory for homogeneous differential systems}\label{sec:lyapunov_theory_for_homogeneous_differential_systems}

In the following few sections, we will restrict our attention to the smaller family $\calF_h(n)\subset\calF(n)$ -- consisting of all \emph{homogeneous} differential systems \eqref{eq:homogeneous_system}, where the nonlinearities are homogeneous polynomials of some fixed degree $n$: 
\begin{equation}\label{eq:homogeneous_system}
\left\{
\begin{array}{l} \displaystyle
\dot{x}=-y+F_n(x,y),\\  \displaystyle
\dot{y}=\phantom{-}x+G_n(x,y),
\end{array}
\right.
\end{equation}
As with the non-homogeneous case, we will write $(f,g)$ for the set of parameters. Note that the system \eqref{eq:homogeneous_system} has $2(n+1)$ real parameters.

Our first result makes precise statements about the minimal gaps between consecutive non-zero homogeneous terms \eqref{eq:homogeneous_lyapunov_term} of the Lyapunov function, as well as between the non-zero Lyapunov constants. 
	
\begin{proposition}\label{prop:lyapunov_gaps}
Given a homogeneous system \eqref{eq:homogeneous_system} of degree $n \ge 2$, the homogeneous terms $V_2, V_3,\dots$ of the Lyapunov function $V$ appearing in \eqref{eq:lyapunov_formula} satisfy 
$$
(V_k)_{i,j} = v_{i,j} =
\left\{
\begin{array}{llcl}
p_{i,j}(f,g) & \text{ if } k = 2 + m(n-1) \text{ for some }  m\in\N,\\
0 &\text{ otherwise.}
\end{array}
\right. 
$$
Here each $p_{i,j}$ is a (linear) polynomial in the parameters $(f,g)$. 
		
Furthermore, the Lyapunov constants $L_1,L_2,\dots$ appearing in \eqref{eq:lyapunov_formula} satisfy
\begin{equation*}
\;\;\;\;\;\;\; L_j =
\left\{
\begin{array}{llcl}
q_{j}(f,g) & \text{ if } j = m(n-1) \text{ for some } m\in\Z^+ \text{ and } n \text{ is even},\\
0 & \text{ otherwise.} & &
\end{array}
\right. 
\end{equation*} 
\begin{equation*}
L_j =
\left\{
\begin{array}{llcl}
r_{j}(f,g) & \text{ if } j = m\tfrac{n-1}{2} \text{ for some } m\in\Z^+ \text{ and } n \text{ is odd},\\
0 & \text{ otherwise.} & &
\end{array}
\right. 
\end{equation*} 
Here, each $q_j$ and $r_j$ is a (most likely non-linear) polynomial in the parameters $(f,g)$. 
\end{proposition}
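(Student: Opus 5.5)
The plan is to read the Lyapunov formula \eqref{eq:lyapunov_formula} degree by degree for the homogeneous system \eqref{eq:homogeneous_system}, and to run an induction on the degree $k$. Collecting terms of total degree $k$ in $\dot V$ gives, for each $k\ge 3$,
\begin{equation*}
x\,(V_k)'_y - y\,(V_k)'_x \;+\; (V_{k-n+1})'_x F_n + (V_{k-n+1})'_y G_n \;=\; \begin{cases} L_{j}(x^2+y^2)^{j+1} & \text{if } k=2j+2,\\ 0 & \text{if } k \text{ is odd,}\end{cases}
\end{equation*}
with the convention that $V_l=0$ for $l<2$. Here $D:=x\partial_y-y\partial_x$ is the rotation operator acting on homogeneous polynomials of degree $k$. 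Its kernel is zero for odd $k$. For even $k$ it is spanned by $(x^2+y^2)^{k/2}$, and its image is a complement of that span. So at each degree the system determines $V_k$ uniquely, once we impose the normalization of the footnote, and it determines $L_j$ as the projection of the forcing term onto $(x^2+y^2)^{j+1}$. This uniqueness is the only input I need from the earlier discussion.

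\textbf{Step 1 (gaps in $V$).} Induct on $k$. If $k-n+1$ is not of the form $2+m(n-1)$, then by the induction hypothesis $V_{k-n+1}=0$, so the forcing term vanishes. Uniqueness (together with $L$ not being forced) then gives $V_k=0$. This shows that the only degrees that can carry nonzero terms are $k=2+m(n-1)$. For the polynomial claim: at such a degree, $V_k$ is obtained by applying the inverse of $D$ (on the image) to a term bilinear in $V_{k-n+1}$ and $(F_n,G_n)$. Its coefficients are therefore polynomials in $(f,g)$. Linearity holds exactly for the first step $m=1$, where $V_{n+1}$ depends linearly on $(f,g)$; for general $m$ the degree is $m$. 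This is the one point where I would reconcile with the statement's parenthetical ``linear'', which is strictly true only for $m=1$.

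\textbf{Step 2 (gaps in $L$).} A constant $L_j$ can be nonzero only if the degree $2j+2$ equals $k=2+m(n-1)$ for some $m$, with $V_{k-n+1}\neq 0$. This gives $2j=m(n-1)$.
\begin{itemize}
\item If $n$ is odd, then $n-1$ is even, so $j=m\frac{n-1}{2}$ for any $m\in\Z^+$.
\item If $n$ is even, then $n-1$ is odd, so $m$ must be even, say $m=2m'$, and $j=m'(n-1)$.
\end{itemize}
In every other case the equation at degree $2j+2$ has zero forcing, and uniqueness yields $L_j=0$. The surviving $L_j$ are projections of polynomial expressions, hence polynomials $q_j$ and $r_j$ in $(f,g)$, and in general they are nonlinear.

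\textbf{Main obstacle.} The delicate point is to justify that ``zero forcing implies zero solution'' at even degrees. There the kernel of $D$ allows adding $c(x^2+y^2)^{k/2}$ to $V_k$. I would handle this by fixing the normalization of the footnote, setting those kernel coefficients to zero at every even degree. With this choice $V_k$ is uniquely determined, and it vanishes whenever the forcing does. Everything else is bookkeeping with the degree arithmetic.
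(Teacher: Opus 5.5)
Your proof is correct and follows essentially the same route as the paper. You match the Lyapunov formula degree by degree, observe that the forcing $(V_{k-n+1})'_xF_n+(V_{k-n+1})'_yG_n$ vanishes unless $k=2+m(n-1)$, remove the kernel of $x\partial_y-y\partial_x$ at even degrees by the footnote's normalization, and read off which $L_j$ survive from the parity of $m(n-1)$. Your single induction on $k$ just packages the paper's case split ($3\le k\le n$, $k=n+1$, $n+2\le k\le 2n-1$, $k=2n$, then ``the same pattern repeats'') more cleanly.

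Your caveat on linearity is also well founded. Only $V_{n+1}$ is linear in $(f,g)$, while $V_{2+m(n-1)}$ is homogeneous of degree $m$ in $(f,g)$. The paper's own proof of Theorem~\ref{thm:homogeneous_bautin} implicitly concedes this by writing $v_{2n}=P_2^V(f,g)\cdot v_{n+1}$, which is linear in $v_{n+1}$ but quadratic in $(f,g)$.
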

	
Note that, generically, the polynomials $P_k = (p_{k,0},\dots, p_{0,k})$, $q_j$, and $r_j$ appearing in the proposition are not identically zero. The only exception to this rule is when the coefficients $(f,g)$ of $F_n$ and $G_n$ satisfy a (partial) center condition.

Before presenting the proof, we mention an important aspect of the proposition: the gaps in the sequences $\{V_k\}_{k\ge 2}$ and $\{L_j\}_{j\ge 1}$ grow with the degree $n$. Also note how the parity of the degree $n$ comes into play for the Lyapunov constants. The reason for this will become clear in the proof of the proposition.


\begin{proof}
In the homogeneous setting, the Lyapunov formula \eqref{eq:lyapunov_formula} simplifies to
\begin{eqnarray}\label{eq:homogeneous_lyapunov_formula}
\dot{V}(x,y) &=& ((V_2)'_x+(V_3)'_x+\cdots)(-y+F_n)+((V_2)'_y+(V_3)'_y+\cdots)(x+G_n) 
\nonumber\\
		     &=& L_{1}(x^{2}+y^{2})^{2}+L_{2}(x^{2}+y^{2})^{3}+L_{3}(x^{2}+y^{2})^{4}+\cdots.
\end{eqnarray}
Note that the terms of \eqref{eq:homogeneous_lyapunov_formula} can be grouped into specific blocks as follows:
\begin{eqnarray}\label{eq:homogeneous_lyapunov_equations}
\dot{V}(x,y)
&=& (x+(V_3)'_x+\cdots)(-y+F_n)+(y+(V_3)'_y+\cdots)(x+G_n) \nonumber\\
&=& \underbrace{x\sum_{k\ge3}(V_k)'_y - y\sum_{k\ge3}(V_k)'_x}_{\text{deg} = 3,4,\dots} + \underbrace{xF_n + yG_n}_{\text{deg} = n+1} + \underbrace{F_n\sum_{k\ge3}(V_k)'_x + G_n\sum_{k\ge3}(V_k)'_y}_{\text{deg} = n+2,n+3,\dots} \nonumber\\
&=& L_{1}(x^{2}+y^{2})^{2}+L_{2}(x^{2}+y^{2})^{3}+L_{3}(x^{2}+y^{2})^{4}+\cdots.
\end{eqnarray}
		
Here we see that there are three main classes of linear equations to consider, depending on the degree $k$ of the terms we are matching. 
		
\textbf{Case $3 \le k \le n$:} For low order degrees, the Lyapunov formula \eqref{eq:homogeneous_lyapunov_equations} reduces to
\begin{equation}\label{eq:small_degree}
x(V_k)'_y -y(V_k)'_x	=\left\{
\begin{array}{lcl}
L_{k/2-1}(x^{2}+y^{2})^{k/2} &\quad& \text{if } k \text{ is even},\\
0                            &\quad& \text{if } k \text{ is odd.}
\end{array}
\right.
\end{equation} 
The equation \eqref{eq:small_degree} can be cast into a system of $k+1$ linear equations; the unknowns being the coefficients of $V_k$. When $k$ is odd, the right-hand side vanishes, and it is easy to see that this system only has the trivial solution $V_k = 0$. When $k$ is even, the linear system can be split into two independent parts: one with multiples of $L_{k/2-1}$ in the right-hand side, and another with a zero right-hand side. The first system has only the trivial solution, which forces $L_{k/2-1}$ to vanish, as well as the $k/2+1$ coefficients of $V_k$ appearing. The second system of the remaining $k/2$ equations is under-determined, and admits the trivial solution. It follows that $V_k = 0$ for $k = 3,4,\dots, n$. Likewise, we have $L_j = 0$ for $j = 1,2,\dots, \floor{n/2}-1$.
		
\textbf{Case $k = n+1$:} This is the first instance where the nonlinearities $F_n$ and $G_n$ come into play. The Lyapunov formula \eqref{eq:homogeneous_lyapunov_equations} then reduces to
\begin{equation}\label{eq:mid_degree}
x(V_{n+1})'_y -y(V_{n+1})'_x + xF_n + yG_n	=\left\{
\begin{array}{lcl}
L_{(n-1)/2}(x^{2}+y^{2})^{(n+1)/2} &\quad& \text{if } n+1 \text{ is even},\\
0                                  &\quad& \text{if } n+1 \text{ is odd.}
\end{array}
\right.
\end{equation} 
Again, we cast \eqref{eq:mid_degree} into a system of $n+2$ linear equations; the unknowns being the coefficients of $V_{n+1}$. This time, however, the terms stemming from the nonlinearities $F_n$ and $G_n$ can be moved to the right-hand side. As a consequence, we no longer can expect a trivial solution. Indeed, unless the coefficients of $F_n$ and $G_n$ have been selected \emph {very} carefully\footnote{Each such non-generic selection of the parameters corresponds to a partial center condition.}, we will have a non-zero $V_{n+1}$ as the solution. Furthermore, when $n+1$ is even, we also get our first non-vanishing Lyapunov constant $L_{(n-1)/2}$.	

\textbf{Case $n+2 \le k \le 2n-1$:}
Here we are considering the degrees $k = n+i$ for $i = 2,\dots n-1$. The left-hand side of the homogeneous Lyapunov formula \eqref{eq:homogeneous_lyapunov_equations} then reduces to
\begin{equation}\label{eq:third_case_lhs}
x(V_{n+i})'_y - y(V_{n+i})'_x + F_n(V_{i+1})'_x + G_n(V_{i+1})'_y. 
\end{equation}
Since we have already established that $V_3,\dots, V_n$ all vanish, so do the two right-most terms appearing in \eqref{eq:third_case_lhs}. Therefore the full equations \eqref{eq:homogeneous_lyapunov_equations} simplify to  
\begin{equation}\label{eq:third_case}
x(V_{n+i})'_y -y(V_{n+i})'_x =		
\left\{
\begin{array}{lcl}
L_{(n+i)/2-1}(x^{2}+y^{2})^{(n+i)/2} &\quad& \text{if } n+i \text{ is even},\\
0                                    &\quad& \text{if } n+i \text{ is odd.}
\end{array}
\right.
\end{equation} 
By the exact same reasoning as for equation \eqref{eq:small_degree}, this admits only the trivial solutions: $V_k = 0$ for $k = n+2,\dots, 2n-1$, and $L_j = 0$ for $j = \ceil{n/2},\dots, \floor{(2n-1)/2}-1$.
		
Carrying on, something interesting happens. We have now reached the degree $k = 2n$ which we will treat as an extra (forth) case. 
		
\textbf{Case $k = 2n$:} Now the Lyapunov formula \eqref{eq:homogeneous_lyapunov_equations} reduces to
\begin{equation}\label{eq:fourth_case}
x(V_{2n})'_y -y(V_{2n})'_x + F_n(V_{n+1})'_x + G_n(V_{n+1})'_y	= L_{(n-1)}(x^{2}+y^{2})^{n}.
\end{equation} 
Note that, since the degree $k$ is even (independently of the parity of $n$), there are no cases in the right-hand side. We can treat \eqref{eq:fourth_case} as the case  $k=n+1$, knowing that $V_{n+1}\neq 0$ (unless a partial center condition is satisfied by $(f,g)$). Generically, we can now solve explicitly for non-zero $V_{2n}$ and $L_{n-1}$.
		
Summarizing, for the degrees $k=3,\dots 2n$, we find that the only non-zero homogeneous terms of the Lyapunov function are $V_{n+1}$ and $V_{2n}$; all other terms must vanish. We also find that if $n$ is odd, then $L_{(n-1)/2}$ is the first non-vanishing Lyapunov constant, whereas $L_{n-1}$ is non-zero independently of the parity of $n$. This establishes the proposition for the cases $m=1$ and $m=2$. 
		
It is clear that, when matching higher degree terms of the Lyapunov formula, the exact same pattern will repeat itself. Indeed, for a general $m\in\Z^+$, the key relations \eqref{eq:mid_degree} and \eqref{eq:fourth_case} with $k = 2 + m(n-1)$ can be transformed into
\begin{eqnarray}\label{eq:general_case}
x(V_{2 + m(n-1)})'_y -y(V_{2 + m(n-1)})'_x & + &(V_{2 + (m-1)(n-1)})'_xF_n + (V_{2 + (m-1)(n-1)})'_yG_n = \nonumber\\
&=& 
\left\{
\begin{array}{lcl}
L_{m\tfrac{n-1}{2}}(x^{2}+y^{2})^{m\tfrac{n-1}{2}+1} &\quad& \text{if } m(n-1) \text{ is even},\\
0                                  &\quad& \text{if } m(n-1) \text{ is odd.}
\end{array}
\right.
\end{eqnarray}
Note that we can only use (\ref{eq:general_case}) to solve for a Lyapunov constant when the product $m(n-1)$ is even. There are two scenarios here: if $n$ is odd, then $m(n-1)$ is always even, and we can solve for the Lyapunov constant with index $m\tfrac{n-1}{2}$ for each $m\in\Z^+$. If, on the other hand, $n$ is even, then $m(n-1)$ is even only when $m$ is. In this situation, we can only use (\ref{eq:general_case}) to solve for the Lyapunov constant with index $m\tfrac{n-1}{2}$ for $m \in 2\Z^+$.
		
All remaining terms of degree $k = 2 + m(n-1) + i$, $i = 1,\dots n-2$ can be considered by analogues to (\ref{eq:third_case_lhs}) and (\ref{eq:third_case}). This leads to only trivial (zero) solutions for $V_k$ and the corresponding Lyapunov constants $L_j$. In summary, the statements of the proposition give a complete account of the only non-zero $V_k$ and $L_j$ in terms of the degree $n$ and the parameters $(f,g)$ of the nonlinearities $F_n$ and $G_n$ of the homogeneous system \eqref{eq:homogeneous_system}.
\end{proof} 
	


As a direct consequence of the proposition, we see that the first non-zero Lyapunov constants follow a distinct pattern, which depends on the parity of $n$. Another consequence of the proof of the proposition is that we have uncovered an explicit algebraic structure of the non-zero Lyapunov constants. 
We capture this finding in the following corollary:

\begin{corollary}\label{cor:vn_explicit_in_Lj}
Given a homogeneous system \eqref{eq:homogeneous_system} of degree $n \ge 2$, generically all $n+2$ coefficients of $V_{n+1}$ appear explicitly in the expression of each non-zero Lyapunov constant $L_j$, for $j \geq n-1$. Furthermore, each coefficient of $V_{n+1}$ depends linearly on the underlying parameters $(f,g)$.
\end{corollary}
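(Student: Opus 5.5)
The corollary has two claims. The linearity claim should follow directly from the case $k=n+1$ in the proof of Proposition~\ref{prop:lyapunov_gaps}. The claim that $V_{n+1}$ appears explicitly in every later Lyapunov constant should follow by tracing the recursion \eqref{eq:general_case} upward from $m=2$.

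First, the linearity. In \eqref{eq:mid_degree} the only unknowns are the $n+2$ coefficients $v_{n+1-j,j}$ of $V_{n+1}$, plus $L_{(n-1)/2}$ when $n$ is odd. The operator $W\mapsto xW'_y-yW'_x$ is linear with constant (integer) coefficients. The inhomogeneous term $xF_n+yG_n$ is linear in $(f,g)$.

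To get a unique solution, we fix the normalisation of the kernel (the coefficients of $(x^2+y^2)^{(n+1)/2}$) as in the footnote to the Lyapunov formula. Then Cramer's rule, or simply the explicit triangular structure of the rotation operator in the basis $x^iy^j$, expresses each $v_{n+1-j,j}$ as a fixed rational linear combination of the $f_{i,j},g_{i,j}$. This is the $p_{i,j}$ of the proposition.

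Second, we treat $j=n-1$, which is the case $m=2$, equation \eqref{eq:fourth_case}. We write the equation in polar coordinates, or equivalently project onto the $SO(2)$-invariant component. The operator $x\partial_y-y\partial_x$ is the angular derivative $\partial_\theta$, so its image misses exactly the radial multiples of $(x^2+y^2)^n$. Averaging over the circle gives
\[
L_{n-1} = \frac{1}{2\pi}\int_0^{2\pi}\bigl(F_n(V_{n+1})'_x+G_n(V_{n+1})'_y\bigr)\big|_{r=1}\,d\theta .
\]
This is a bilinear form $B\bigl((f,g),V_{n+1}\bigr)$ in which each coefficient $v_{n+1-j,j}$ enters multiplied by a linear form in $(f,g)$. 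Expanding $(V_{n+1})'_x=\sum_j (n+1-j)v_{n+1-j,j}x^{n-j}y^j$, and similarly for $(V_{n+1})'_y$, shows explicitly that every $v_{n+1-j,j}$ multiplies a nonzero combination of the circle moments $\int\cos^a\theta\sin^b\theta\,d\theta$ times $f,g$. This holds for generic $(f,g)$, that is, off a proper algebraic subset.

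For higher $j$ we induct on $m$ in \eqref{eq:general_case}. $V_{2+m(n-1)}$ is obtained from $V_{2+(m-1)(n-1)}$ by a linear map whose coefficients are linear in $(f,g)$. By induction $V_{2+(m-1)(n-1)}$ is linear in the coefficients of $V_{n+1}$, with coefficients polynomial in $(f,g)$. The same averaging formula then expresses $L_{m(n-1)/2}$ as a linear form in $(v_{n+1,0},\dots,v_{0,n+1})$ with polynomial coefficients.

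The main obstacle is the genericity statement: showing that none of these polynomial coefficients vanishes identically in $(f,g)$. I would handle it by exhibiting a single parameter choice, such as $F_n=x^n$ and $G_n=0$ or a monomial perturbation, for which each coefficient can be computed and is nonzero. A polynomial that is nonzero at one point is nonzero on a Zariski-dense open set, which gives the required generic statement.
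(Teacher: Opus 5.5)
Your linearity step and your induction through \eqref{eq:fourth_case} and \eqref{eq:general_case} are the paper's argument. The paper gives no separate proof of the corollary. It reads the linearity off \eqref{eq:mid_degree}, and the rest off the fact that each right-hand side $F_n(V_{2+(m-1)(n-1)})'_x+G_n(V_{2+(m-1)(n-1)})'_y$ carries $v_{n+1}$ upward, exactly as in the induction proving Theorem~\ref{thm:homogeneous_bautin}. Your circle-average formula for $L_{n-1}$ goes further than the paper does. For every $i$, some $f_{a,b}$ or $g_{a,b}$ is paired with a moment $\int_0^{2\pi}\cos^{p}\theta\sin^{q}\theta\,d\theta$ with $p,q$ both even, hence positive. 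The $f$- and $g$-parts involve disjoint variables, so they cannot cancel. So every $v_{n+1-i,i}$ does get a nonzero linear coefficient in $L_{n-1}$, which the paper only asserts.

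\textbf{The gap.} You defer the non-vanishing for $j>n-1$ (i.e.\ $m\ge 3$), and the route you sketch for it fails as stated.
\begin{itemize}
\item At $F_n=x^n$, $G_n=0$ the forcing term is $x^n(V_{n+1})'_x$, which does not involve $v_{0,n+1}$ because $\partial_x y^{n+1}=0$. So $v_{0,n+1}$ is absent from $V_{2n}$ and from every later $V_k$ and $L_j$, and its coefficient vanishes at this point for all $m$.
\item Already for $m=2$, the same point kills the coefficient $(n+1-i)\frac{1}{2\pi}\int_0^{2\pi}\cos^{2n-i}\theta\sin^{i}\theta\,d\theta$ of every $v_{n+1-i,i}$ with $i$ odd.
\end{itemize}
So no single test point of this kind can work. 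You need witnesses adapted to each coefficient; for fixed $j$ this is enough, since finitely many nonempty Zariski-open sets do intersect. Above all, you need an argument valid for every $m$ simultaneously, because infinitely many $L_j$ are involved and checking them one at a time never ends. The set that is generic for all $j$ at once is then the complement of a countable union of proper subvarieties, not a Zariski-open set.

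The paper does not provide this step either: its ``generically'' rests only on the structural observation above. Your proposal is therefore incomplete precisely where the paper's own justification is heuristic.
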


Note that for odd degrees $n$, the coefficients of $V_{n+1}$ do not appear in the algebraic expression for the very first non-zero Lyapunov constant $L_{(n-1)/2}$. That is why we have added the restriction $j \geq n-1$ in the corollary. We point out that this is the only exception to the rule, and it follows from observing the structure of \eqref{eq:mid_degree}. A more detailed explanation is given in the Appendix.

\section{Main theorems for homogeneous differential systems}\label{sec:main_theorem_for_generic_homogeneous_differential_systems}
	
Now that we understand the sparsity patterns of the non-zero Lyapunov constants $L_j$ and the homogeneous terms $V_k$ of the Lyapunov function, we can turn our attention to the Bautin ideal of the family.

We will need a few more definitions to make our results precise.	
\begin{definition}
Given a finitely parameterized family of differential equations $\calF$, its \emph{Bautin ideal} $\mathfrak{B}(\calF)$ is the ideal generated by the Lyapunov constants, $\left< L_1, L_2, \dots\right>$, in the ring of polynomials over the set of parameters of $\calF$.
\end{definition}


It is well-known that, for both families $\calF(n)$ and $\calF_h(n)$, all Lyapunov constants are polynomials\footnote{When adding a non-zero trace term to a member of such a family, the Lyapunov constants become analytic functions in the parameters $(f,g)$; not merely polynomials.} in the parameters of the family. Therefore, it follows (by Hilbert's Basis Theorem) that, for either family $\calF$, the Bautin ideal is finitely generated: there exists a positive integer $K = K(\calF)$ such that $\mathfrak{B}(\calF) = \left< L_1, L_2, \dots \right> = \left< L_1, L_2, \dots, L_K\right>$. As a consequence, this is also the maximal bound on the order of a weak focus. Beyond order $K$, any weak focus becomes a center. In light of this, we call $K(\calF)$ the \emph{center number} of the family $\calF$.

The center number of a family is very useful to know. In order to verify the existence of a center, we only need to compute the $K$ first (non-trivial) Lyapunov constants. If they all vanish, we have a center; if not, we have a (weak) focus. Of course, even knowing an upper bound on the center number can be used to explicitly verify a center. Unfortunately, for general families there are no effective methods to compute their center numbers, or any useful upper bounds. We can, however, \emph{enclose} the Bautin ideal $\mathfrak{B}$ by a finitely generated ideal $\mathfrak{V}$, which we understand in detail.

\begin{theorem}\label{thm:homogeneous_bautin}
For even degrees $n$, the ideal generated by the coefficients of $V_{n+1}$ contains the Bautin ideal of $\calF_h(n)$:
$$
\mathfrak{B}(\calF_h(n)) = \left< L_1, L_2, \dots \right> \subseteq \left< v_{n+1,0}, v_{n,1}, \dots, v_{0,n+1} \right>,
$$
and $V_{n+1}=0$ gives a collection of center conditions. For odd degrees $n$, we have 
$$
\mathfrak{B}(\calF_h(n)) = \left< L_1, L_2, \dots \right> \subseteq \left< L_{(n-1)/2},v_{n+1,0}, v_{n,1}, \dots, v_{0,n+1} \right>,
$$
and $L_{(n-1)/2} = 0$ together with $V_{n+1}=0$ provide a collection of center conditions.
\end{theorem}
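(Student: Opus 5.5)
Following the proof of Proposition~\ref{prop:lyapunov_gaps}, the plan is an induction on $m$ over the recursive structure \eqref{eq:general_case}. It suffices to show that every $L_j$ lies in $\mathfrak{V}$, where $\mathfrak{V}=\left<v_{n+1,0},\dots,v_{0,n+1}\right>$ for even $n$ and $\mathfrak{V}=\left<L_{(n-1)/2},v_{n+1,0},\dots,v_{0,n+1}\right>$ for odd $n$. By the proposition, the only possibly non-zero data are $V_{2+m(n-1)}$ and $L_{m(n-1)/2}$ with $m\in\Z^+$. I would prove the stronger claim that, for every $m\ge 2$, each coefficient of $V_{2+m(n-1)}$ and the constant $L_{m(n-1)/2}$ (when it is defined) belong to the ideal $\left<v_{n+1,0},\dots,v_{0,n+1}\right>\subseteq\mathfrak{V}$. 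For $m=1$ the constant $L_{(n-1)/2}$ is non-zero only for odd $n$, and it lies in $\mathfrak{V}$ by definition. This is exactly why the odd case needs the extra generator.

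\textbf{Step 1: the linear map.} Equation \eqref{eq:general_case} has the form $\mathcal{D}(V_k) - L\,(x^2+y^2)^{k/2} = -\big(F_n(V_{k-n+1})'_x+G_n(V_{k-n+1})'_y\big)$, where $\mathcal{D}=x\partial_y-y\partial_x$. Fix the normalization of the footnote, so that the solution is unique. Solving this system gives the coefficients of $V_k$ and the constant $L$ as fixed $\R$-linear functions of the right-hand side. These functions have real, parameter-independent coefficients, because $\mathcal{D}$ and $(x^2+y^2)^{k/2}$ do not depend on $(f,g)$. Moreover, the right-hand side is bilinear: linear in $(f,g)$ and linear in the coefficients of $V_{k-n+1}$.

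\textbf{Step 2: induction.} For $m=2$, each coefficient of $V_{2n}$ and $L_{n-1}$ is therefore a sum of terms (polynomial in $(f,g)$)$\,\times\, v_{i,j}$, with $i+j=n+1$. Hence these lie in $\left<v_{n+1,0},\dots,v_{0,n+1}\right>$. This is the content of Corollary~\ref{cor:vn_explicit_in_Lj}, sharpened to an ideal-membership statement. For the inductive step, assume every coefficient of $V_{2+(m-1)(n-1)}$ lies in the ideal. The right-hand side for level $m$ is a polynomial combination of those coefficients, so it lies in the ideal, and Step 1 shows that the same holds for $V_{2+m(n-1)}$ and $L_{m(n-1)/2}$. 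Since all other $L_j$ vanish identically, this proves $\mathfrak{B}(\calF_h(n))\subseteq\mathfrak{V}$.

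\textbf{Step 3: center conditions.} If the parameters satisfy $V_{n+1}=0$ (together with $L_{(n-1)/2}=0$ when $n$ is odd), then every generator of $\mathfrak{V}$ vanishes. Hence every $L_j$ vanishes, and the origin is a center. Since the $v_{i,j}$ are linear in $(f,g)$, these are linear conditions on the parameters.

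The main obstacle is Step 1, specifically the well-posedness and parameter-independence of the solution operator. The kernel of $\mathcal{D}$ on even degrees (the multiples of $(x^2+y^2)^{k/2}$) has to be fixed by the normalization, so that $V_k$ is a genuine linear function of the right-hand side and not an arbitrary choice. I would first check that $\mathcal{D}$, restricted to the complement of $\mathrm{span}\{(x^2+y^2)^{k/2}\}$, is invertible onto the complement of the same span, with that span projected off to define $L$. This is the standard decomposition of the homological operator. With a different normalization the $V_k$ could pick up arbitrary kernel components, but those components are themselves set to zero by the normalization, so the argument should go through.
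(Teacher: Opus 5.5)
Your proposal is correct and follows essentially the same route as the paper: an induction on $m$ through the recursion \eqref{eq:general_case}, showing that for $m\ge 2$ every coefficient of $V_{2+m(n-1)}$ and every $L_{m(n-1)/2}$ is a combination of the coefficients of $V_{n+1}$ with coefficients polynomial in $(f,g)$, and adding $L_{(n-1)/2}$ as a separate generator for odd $n$ because it decouples from $V_{n+1}$. You also make explicit that, once the kernel of $x\partial_y - y\partial_x$ is fixed by a parameter-independent normalization, the solution map is a fixed real-linear map; the paper uses this only implicitly when it asserts that $P_m^L$ and $P_m^V$ are polynomials in $(f,g)$.
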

	
Note that this theorem provides a finite number of explicit (linear) constraints on the underlying parameters $(f,g)$ \emph{sufficient} to create a center for a homogeneous system \eqref{eq:homogeneous_system}. Also note, however, that the Lyapunov constants can vanish independently of the coefficients of $V_{n+1}$. There are center conditions that are not captured by $V_{n+1} = 0$.

\begin{proof}[Proof of Theorem \ref{thm:homogeneous_bautin}:]

First, we point out that the reason we have to add $L_{(n-1)/2}$ to the ideal in the odd case is directly related to the discussion following Corollary~\ref{cor:vn_explicit_in_Lj}. The linear system that arises from \eqref{eq:mid_degree} allows for a non-zero $L_{(n-1)/2}$ whilst all coefficients of $V_{n+1}$ vanish. 

Moving on, by Proposition \ref{prop:lyapunov_gaps}, the first non-vanishing homogeneous term of the Lyapunov function after $V_2$ is $V_{n+1}$, having $n+2$ coefficients. Rather than using \eqref{eq:mid_degree} to solve directly for $V_{n+1}$, we keep its coefficients as variables in the expressions for higher order terms $V_{2+m(n-1)}$ (for all $m \geq 2$) and Lyapunov constants $L_{m(n-1)/2}$ (when $m(n-1)$ is even). Using equation \eqref{eq:general_case} we will see that all solutions can be expressed as linear functions of the coefficients of $V_{n+1}$. To prove this fact, we will use induction over $m$ applied to equation \eqref{eq:general_case}. 
From here on we fix the degree $n$, and use the notation $v_k = (v_{k,0},\dots,v_{0,k})^T$ to denote the vector of coefficients of the homogeneous term $V_k$ defined in \eqref{eq:homogeneous_lyapunov_term}.
		
\textbf{Base case $m = 2$:} Note that $m(n-1) = 2(n-1)$ is even for all degrees $n$, so by rearranging equation \eqref{eq:general_case} we have
\begin{equation*}
L_{n-1}(x^{2}+y^{2})^{n}+y(V_{2n})'_x-x(V_{2n})'_y
=
(V_{n+1})'_xF_n+(V_{n+1})'_yG_n \stackrel{\text{def}}{=} P_2(f,g,x,y) \cdot v_{n+1}
\end{equation*}      
for some suitable vector $P_2$ of polynomials in the parameters and variables. Matching powers in the variables $(x,y)$, and solving, it follows immediately that $L_{n-1}$ and all coefficients of  $V_{2n}$ are linear combinations of those of $V_{n+1}$. In other words, from $P_2$ we can derive a new vector $P_2^L$ (of length $n+2$) of polynomials, together with a matrix $P_2^V$ (of size $(2n+1)\times (n+2)$) of polynomials such that we have 
$$
L_{n-1} = P_2^L(f,g) \cdot v_{n+1} 
\quad\textrm{and}\quad
v_{2n} = P_2^V(f,g) \cdot v_{n+1}.
$$
We emphasise that $P_2^L$ and $P_2^V$ only depend on the parameters $(f,g)$, not on the variables $(x,y)$. By a slight abuse of notation, we can now write $V_{2n} = P_2^V(f,g,x,y)\cdot v_{n+1}$, where $P_2^V(f,g,x,y)$ is a vector of polynomials in both parameters $(f,g)$ and variables $(x,y)$:
$$
P_2^V(f,g,x,y) = (x^{2n},x^{2n-1}y,\dots, xy^{2n-1}, y^{2n})\cdot P_2^V(f,g). 
$$

\textbf{General case $m>2$:} Suppose that $V_{2+(m-1)(n-1)}$ is linear function of the coefficients of $V_{n+1}$:
$$
V_{2+(m-1)(n-1)}=P_{m-1}^V(f,g,x,y) \cdot v_{n+1}.
$$ 
We now have two cases to consider, depending on the parity of the product $m(n-1)$.

If $m(n-1)$ is odd, we can rearrange and simplify equation \eqref{eq:general_case} as    
\begin{align*}
y(V_{2 + m(n-1)})'_x &- x(V_{2 + m(n-1)})'_y = (V_{2 + (m-1)(n-1)})'_xF_n + (V_{2 + (m-1)(n-1)})'_yG_n \\
&= [(P_{m-1}^V(f,g,x,y))'_xF_n] \cdot v_{n+1} + [(P_{m-1}^V(f,g,x,y))'_yG_n] \cdot v_{n+1} \\
&\stackrel{\text{def}}{=} P_{m}(f,g,x,y) \cdot v_{n+1},
\end{align*}
and it is clear that $V_{2 + m(n-1)}$ depends linearly on $v_{n+1}$.

When $m(n-1)$ is even, we can repeat the above, but we also solve for the next Lyapunov constant:
\begin{eqnarray}\label{eq:long_eq}
L_{m\tfrac{n-1}{2}}(x^{2} &+& y^{2})^{m\tfrac{n-1}{2}+1} + y(V_{2 + m(n-1)})'_x - x(V_{2 + m(n-1)})'_y\nonumber\\
&=& (V_{2 + (m-1)(n-1)})'_xF_n + (V_{2 + (m-1)(n-1)})'_yG_n \nonumber\\
&=& [(P_{m-1}^V(f,g,x,y))'_xF_n] \cdot v_{n+1} + [(P_{m-1}^V(f,g,x,y))'_yG_n] \cdot v_{n+1}\nonumber\\
&\stackrel{\text{def}}{=}& P_{m}(f,g,x,y) \cdot v_{n+1}.
\end{eqnarray}
It follows that both $V_{2 + m(n-1)}$ and $L_{m\tfrac{n-1}{2}}$ depend linearly on $v_{n+1}$. This concludes the induction step, and the statement on the Bautin ideal follows. As a consequence, if the coefficients of $V_{n+1}$ vanish, so will all other homogeneous terms of the Lyapunov function, together with the Lyapunov constants (with the exception of $L_{(n-1)/2}$ when $n$ is odd). 
\end{proof}



We now explain what type of centers that are naturally represented by our choice of basis. Centers come in many guises: Hamiltonian, reversible, and (Darboux) integrable. The ones we can find are of Darboux-type. Note that $v_{n+1}=0$ if and only if $V_{n+1}=0$ for all $x,y \in \R$.
\begin{proposition}\label{prop:darboux_centers}
For all degrees $n \geq 2$, the centers defined by $v_{n+1}=0$ for even $n$, and by $L_{(n-1)/2}=0$ and $v_{n+1}=0$ for odd $n$, correspond to Darboux centers. 
\end{proposition}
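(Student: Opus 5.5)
The plan is to show that the linear conditions $v_{n+1}=0$ (together with $L_{(n-1)/2}=0$ when $n$ is odd) force the radial part of the nonlinearity to vanish, $xF_n+yG_n\equiv 0$. In that case $H(x,y)=x^2+y^2$ is a polynomial first integral built from invariant algebraic curves, so the center is of Darboux type.

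\emph{Step 1.} We read off the consequence of $V_{n+1}=0$ from the degree-$(n+1)$ equation \eqref{eq:mid_degree}. Substituting $V_{n+1}=0$ gives
$$
xF_n+yG_n=\begin{cases} L_{(n-1)/2}\,(x^2+y^2)^{(n+1)/2}, & n \text{ odd},\\ 0, & n \text{ even}.\end{cases}
$$
For odd $n$ we additionally impose $L_{(n-1)/2}=0$. Hence in both parity cases the hypotheses of the proposition yield exactly $xF_n+yG_n\equiv 0$.

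A caveat: $V_{n+1}$ is only determined up to the kernel of $x\partial_y-y\partial_x$ on degree-$(n+1)$ forms. That kernel is spanned by $(x^2+y^2)^{(n+1)/2}$ when $n+1$ is even and is trivial otherwise. Under the normalization used in the paper this kernel component is set to zero, so it does not affect the argument. I expect the main care to be needed exactly here: checking that ``$v_{n+1}=0$'' in the paper's normalization really is equivalent to the vanishing of the right-hand side, and not merely to $V_{n+1}$ lying in that kernel.

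\emph{Step 2.} We extract the algebraic structure. Since $xF_n=-yG_n$ and $x,y$ are coprime, $y\mid F_n$ and $x\mid G_n$. This gives $F_n=-yA$ and $G_n=xA$ for a homogeneous polynomial $A$ of degree $n-1$. The system therefore becomes
$$
\dot x=-y\,(1+A(x,y)),\qquad \dot y=x\,(1+A(x,y)).
$$

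\emph{Step 3.} We exhibit the Darboux structure. A direct computation gives $\dot H=2(xF_n+yG_n)=0$. So each circle $f_c=x^2+y^2-c$ is an invariant algebraic curve with cofactor $0$, and $H$ is a first integral of Darboux form $\prod f_i^{\lambda_i}$, here with a single factor and $\lambda=1$.

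One can add that $1+A=0$ is also an invariant algebraic curve. Its cofactor is $\dot A/(1+A)$, and $\dot A=(1+A)(x\partial_yA-y\partial_xA)$, so this cofactor is the polynomial $x\partial_yA-y\partial_xA$. Consequently $1/(1+A)$ is a Darboux inverse integrating factor, and it reduces the system to the linear rotation. Since $1+A>0$ near the origin, the orbits there are the circles $x^2+y^2=c$ traversed with nonvanishing angular speed, so the origin is a center. This is consistent with Theorem~\ref{thm:homogeneous_bautin} and confirms that the center has a Darboux first integral.
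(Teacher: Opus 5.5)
Your proposal is correct and follows essentially the paper's own argument. You read off $xF_n+yG_n\equiv 0$ from \eqref{eq:mid_degree}, factor $F_n=-yA$, $G_n=xA$ (your coprimality argument is cleaner than the paper's ``assuming $xy\neq 0$''), and exhibit $x^2+y^2$ with cofactor $0$ and $1+A$ with cofactor equal to the divergence as invariant curves. These are exactly the paper's $\gamma_1,\gamma_2$ with $A=-\tilde{F}_n$, and your integrating factor $1/(1+A)$ is the $\lambda_1=0$ case of the paper's $R$.

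Two minor remarks. Your normalization caveat is unnecessary: even a $V_{n+1}$ lying in the kernel of $x\partial_y-y\partial_x$ would give the same conclusion from \eqref{eq:mid_degree}. Also, it is $1+A$ that is the inverse integrating factor; $1/(1+A)$ is the integrating factor.
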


\begin{proof}
In order to establish the presence of a Darboux center, we need to find an invariant\footnote{Technically speaking, $\gamma$ itself is usually not constant along trajectories, but its logarithmic derivative is simple: $\tfrac{d}{dt}\log \gamma(x(t),y(t)) = K(x(t),y(t))$.} algebraic curve $\gamma$ together with a cofactor $K$, such that 
\begin{equation}\label{eq:inv_alg_curve}
\dot{x}\frac{\partial \gamma}{\partial x} + \dot{y}\frac{\partial \gamma}{\partial y}=K \gamma.
\end{equation}

Assuming that $V_{n+1} = 0$ (and that $L_{(n-1)/2}=0$ for $n$ odd), it follows from \eqref{eq:mid_degree} that $xF_n + yG_n = 0$.  Therefore, given a homogeneous system \eqref{eq:homogeneous_system}, we have
\begin{equation*}
x\dot{x} + y\dot{y} = -xy + xF_n + xy + yG_n = 0,
\end{equation*}
so all solutions stay on concentric circles centred at the origin. In this case, we can select $\gamma_1 = x^2 + y^2$, and $K_1 = 0$ in \eqref{eq:inv_alg_curve}.

Another consequence of $xF_n + yG_n = 0$, is that $F_n$ contains no $x^n$-term, and $G_n$ contains no $y^n$-term. This means that we can factor $F_n = y\tilde {F}_n$ and $G_n = x\tilde{G}_n$, and it follows that
\begin{equation*}
0 = xF_n + yG_n = xy\tilde{F}_n + xy\tilde{G}_n = xy(\tilde{F}_n + \tilde{G}_n).
\end{equation*}
Assuming that $xy\neq 0$, we therefore have $\tilde{F}_n + \tilde{G}_n = 0$, or in other words $G_n = x\tilde{G}_n = -x\tilde{F}_n$. We can therefore write our homogeneous system \eqref{eq:homogeneous_system} in the form
\begin{equation}\label{eq:darbouxgen}
\left\{
\begin{array}{l} \displaystyle
\dot{x}=-y + y\tilde{F}_n = -y(1 - \tilde{F}_n),\\ 
\dot{y}=\phantom{-}x -x\tilde{F}_n  = \phantom{-}x(1 - \tilde{F}_n).
\end{array}\right.
\end{equation}
We now see a new candidate for an invariant algebraic curve: $\gamma_2 = (1 - \tilde{F}_n)$. Solving for its cofactor $K_2$ using \eqref{eq:inv_alg_curve} and \eqref{eq:darbouxgen}, we arrive at
$$
\dot{x}\frac{\partial \gamma_2}{\partial x} + \dot{y}\frac{\partial \gamma_2}{\partial y}
=
-y(1 - \tilde{F}_n)(-\tilde{F}_n)'_x + x(1 - \tilde{F}_n)(-\tilde{F}_n)'_y
=
\left(y(\tilde{F}_n)'_x - x(\tilde{F}_n)'_y\right)(1 - \tilde{F}_n),
$$
so $K_2 = y(\tilde{F}_n)'_x - x(\tilde{F}_n)'_y$. Note that $K_2$ equals the divergence of the system \eqref{eq:darbouxgen}:
$$
\frac{\partial \dot{x}}{\partial x}+\frac{\partial \dot{y}}{\partial y}
=
y(\tilde{F}_n)'_x - x(\tilde{F}_n)'_y
=
K_2.
$$
By Darboux's Theorem of Integrability \cite[Chapter 8]{DL}, there exists complex numbers $\lambda_1, \lambda_2$ such that $\lambda_1 K_1 + \lambda_2 K_2 = -K_2$ if and only if 
$\gamma_1^{\lambda_1}\gamma_2^{\lambda_2}$ is an integrating factor of \eqref{eq:darbouxgen}.
Since $K_1 = 0$, we can select any $\lambda_1$, whilst we must select $\lambda_2 = -1$. Therefore, any function
$$
R = \frac{(x^2 + y^2)^{\lambda_1}}{1 - \tilde{F}_n}
$$
is an integrating factor for \eqref{eq:darbouxgen}, with the corresponding first integral
$$
H = -\frac{(x^2 + y^2)^{\lambda_1 + 1}}{2(\lambda_1 + 1)}\quad(\lambda_1\neq -1),\qquad  
H=\tfrac{1}{2}\ln(x^2+y^2)\quad (\lambda_1 = -1).
$$
\end{proof}

\section{Non-homogeneous differential systems}\label{sec:non_homogeneous_differential_systems}

In this section, we will consider the larger family \eqref{eq:non_homogeneous_system} of \emph{non-homogeneous} polynomial vector fields of degree $n$ with parameters in $\mathbb{R}$. Compared to the homogeneous case, the number of parameters is much larger: $n^2+3n-4$ instead of $2(n+1)$. Another difference is that we no longer observe gaps in the sequence of Lyapunov constants; there is no equivalent to Proposition~\ref{prop:lyapunov_gaps}. This is a direct consequence of the Lyapunov equation \eqref{eq:lyapunov_formula} now containing the full nonlinearities of the differential system \eqref{eq:non_homogeneous_system}.

In this more general setting, additional terms appear in the equations governing both the coefficients of the Lyapunov function and the Lyapunov constants. Theorem \ref{thm:homogeneous_bautin} can be extended by observing that the homogeneous terms $v_k$ $(k = 2,\dots,n)$ of the Lyapunov function can be decomposed as follows:
\begin{eqnarray}\label{eq_homogeneous_V_terms}
v_{3} &=& v^h_{3}(f_2,g_2), \nonumber\\	
v_{4} &=& v^h_{4}(f_3,g_3) + U_{4}(v_3), \nonumber\\
v_{5} &=& v^h_{5}(f_4,g_4) + U_{5}(v_3,v_4), \\
&\vdots& \nonumber\\
v_{n+1} &=& v^h_{n+1}(f_n,g_n) + U_{n+1}(v_3, v_4, \dots, v_n)\nonumber.
\end{eqnarray}
The notation here is a bit involved, so let us pause and explain it in detail. First, we are using the shorthand notation $v_k$ to denote the coefficients of the homogeneous term $V_k$, so $v_k = (v_{k,0}, v_{k-1,1},\dots, v_{0,k})^T$. Second, we are using the shorthand notation $(f_k, g_k)$ to denote the coefficients of the homogeneous nonlinearities $(F_k, G_k)$, so $f_k = (f_{k,0}, f_{k-1,1},\dots, f_{0,k})$, and similarly for $g_k$.

In \eqref{eq_homogeneous_V_terms}, the terms $v^h_k$ correspond to the terms obtained by only studying the homogeneous differential system of degree $k$. An important feature, which was pointed out in Corollary~\ref{cor:vn_explicit_in_Lj}, is that each $v^h_{k+1}$ is \emph{linear} in the parameters $(f_{k}, g_{k})$. The terms $U_k$ capture the effects of the non-homogenous terms in the differential system, and we will see in Theorem \ref{thm:non_homogeneous_bautin} that
$$
U_{k+1}(v_3, v_4, \dots, v_k) = \sum_{j \leq k}P_j(f_2,g_2,\dots,f_{k-1},g_{k-1}) \cdot v^h_{j}.
$$
The whole point of the decomposition \eqref{eq_homogeneous_V_terms} is that each $V_k$ can be expressed in terms of $v^h_3,\dots, v^h_k$, each of which we understand well from the previous sections.
	
\begin{theorem}\label{thm:non_homogeneous_bautin}
For any degree $n\ge 2$, the ideal generated by the homogeneous coefficients of $v_{3},\dots,v_{n+1}$, defined by \eqref{eq_homogeneous_V_terms}, contains the Bautin ideal of $\calF(n)$:
\begin{equation*}
		\mathfrak{B}(\calF(n)) = \left< L_1, L_2, \dots \right> \subseteq \left<v^h_3, v^h_4, \dots, v^h_{n+1} \right>,
\end{equation*}
and $v^h_3 = \dots = v^h_{n+1} = 0$ gives a collection of center conditions.
\end{theorem}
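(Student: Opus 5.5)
We propose to mimic the induction in the proof of Theorem~\ref{thm:homogeneous_bautin}, now organized around the degree $k$ of the terms matched in the Lyapunov formula \eqref{eq:lyapunov_formula}. The claim we aim to prove by induction on $k\ge 3$ is: every coefficient of $V_k$, and every Lyapunov constant $L_j$ with $2j+2\le k$, is an $\R[f,g]$-linear combination of the entries of $v^h_3,\dots,v^h_{\min(k,n+1)}$. For $k\le n+1$ this is the decomposition \eqref{eq_homogeneous_V_terms}. For $k> n+1$ no new parameters enter, and the claim should follow exactly as in the general case $m>2$ of Theorem~\ref{thm:homogeneous_bautin}.

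\textbf{Degree-$k$ equation.} Matching degree $k$ in \eqref{eq:lyapunov_formula} gives
\begin{equation*}
x(V_k)'_y - y(V_k)'_x + xF_{k-1}+yG_{k-1} + \sum_{\substack{i+\ell = k+1\\ 3\le i,\ 2\le \ell\le n}} \bigl((V_i)'_xF_\ell + (V_i)'_yG_\ell\bigr) = L_{k/2-1}(x^2+y^2)^{k/2},
\end{equation*}
with the convention $F_{k-1}=G_{k-1}=0$ for $k-1>n$, and the right-hand side zero for odd $k$. The operator $x\partial_y-y\partial_x$, together with the $L$-term, is the same linear map used in Proposition~\ref{prop:lyapunov_gaps}. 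Under the normalization fixed in that footnote, it has a parameter-independent inverse on its range.

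The term $xF_{k-1}+yG_{k-1}$ is exactly the forcing that produces $v^h_k$ in the homogeneous problem of degree $k-1$ (equation \eqref{eq:mid_degree}). By linearity, $v_k=v^h_k+U_k$, where $U_k$ solves the same equation with forcing given by the sum over $i<k$.

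\textbf{Induction step.} By the induction hypothesis, each $v_i$ with $i<k$ is a combination $\sum_j P_j\, v^h_j$. The forcing $(V_i)'_xF_\ell+(V_i)'_yG_\ell$ is bilinear in $(v_i,(f_\ell,g_\ell))$, so the forcing is an $\R[f,g]$-combination of the $v^h_j$. Applying the fixed inverse operator preserves this property. Hence $U_k$ and, when $k$ is even, the portion of $L_{k/2-1}$ coming from the sum, lie in $\langle v^h_3,\dots\rangle$.

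\textbf{Main obstacle.} The delicate point is the part of $L_{k/2-1}$ coming from $xF_{k-1}+yG_{k-1}$ when $k$ is even, $k\le n+1$. This is the analogue of $L_{(n-1)/2}$ in Theorem~\ref{thm:homogeneous_bautin}, which was not controlled by $v_{n+1}$ there. It equals a circle average of $xF_{k-1}+yG_{k-1}$ and need not vanish when $v^h_k=0$.

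I would try first to interpret $v^h_k$ as the \emph{full} solution vector of the degree-$k$ homogeneous linear system, including this constant. That would mean the homogeneous $L_{(k-2)/2}$ is counted among the generators, which is consistent with the convention "terms obtained by only studying the homogeneous system of degree $k$". If the paper instead intends only the $V$-coefficients, the stated inclusion would need these extra homogeneous constants added, as in the odd case of Theorem~\ref{thm:homogeneous_bautin}.

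With that convention, every $L_j$ lies in $\langle v^h_3,\dots,v^h_{n+1}\rangle$. Setting all $v^h_k=0$ kills every $V_k$ and every $L_j$, so the origin is a center.
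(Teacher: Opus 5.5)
Your induction uses the same linearity mechanism as the paper. Forcing terms are bilinear in the earlier $v_i$ and the parameters, and they are pushed through a fixed, parameter-independent inverse of the rotation operator (augmented by the $L$-term). The difference is that you organize it by the degree of the matched terms, while the paper adds one homogeneous nonlinearity at a time. Your bookkeeping is the sounder one. With nonlinearities of degrees $k<n$, the nonzero $V_d$ occur at all degrees $d=2+a(k-1)+b(n-1)$. The paper only writes equations at degrees $2+m(n-1)$ with a single matching index $\bar m$, so it misses mixed contributions: for $k=3$, $n=4$, $V_{11}$ receives $(V_9)'_xF_3+(V_9)'_yG_3$ even though $9=2\bar m$ has no solution. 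Your argument correctly shows that every coefficient of every $V_d$ lies in $\langle v^h_3,\dots,v^h_{n+1}\rangle$.

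The obstacle you isolate is real, and no convention removes it. The paper defines $v^h_k$ as the homogeneous part of $v_k=(v_{k,0},\dots,v_{0,k})^T$, so it contains no Lyapunov constant. With that definition the theorem is false for every $n\ge 3$:
\begin{itemize}
\item Take $F_3=x(x^2+y^2)$, $G_3=y(x^2+y^2)$, and all other $F_j,G_j=0$. Then $v^h_j=0$ for $j\neq 4$.
\item The degree-$4$ equation gives $L_1=1$ and, with the normalization $v_{2,2}=0$, $V_4=0$; hence $v^h_4=0$.
\item Every element of $\langle v^h_3,\dots,v^h_{n+1}\rangle$ therefore vanishes at this parameter point, while $L_1$ does not. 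Indeed $\dot r=r^3$, so the origin is a focus.
\end{itemize}

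The paper's proof does not notice this. It uses \eqref{eq:mid_degreek} only to determine $V_{k+1}$. It asserts that all Lyapunov constants are combinations of $v^h_{k+1},v^h_{n+1}$, without the exception that Theorem~\ref{thm:homogeneous_bautin} needed for odd degree. The proof of Proposition~\ref{prop:teodarbgen} even claims that $v^h_{k+1}=0$ forces $L_{(k-1)/2}=0$, which contradicts the Appendix. Specializing the lower-degree parameters to zero would give $L_{(n-1)/2}\in\langle v_{n+1}\rangle$ in $\calF_h(n)$ for odd $n$, which the paper itself says fails.

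So drop the reinterpretation and commit to your second alternative. What your proof establishes is
\[
\mathfrak{B}(\calF(n))\subseteq\bigl\langle v^h_3,\dots,v^h_{n+1},\ L^h_{(d-1)/2}\ :\ d\ \text{odd},\ 3\le d\le n\bigr\rangle ,
\]
where $L^h_{(d-1)/2}$ is the circle average of $xF_d+yG_d$. The extra generators are needed only for the Lyapunov constants, not for the $V$-coefficients. The corresponding center conditions are exactly $xF_d+yG_d=0$ for all $d$, which is what Proposition~\ref{prop:teodarbgen} actually uses. For $n=2$ there is no such $d$, and the statement holds as written.
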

	
Analogously to Theorem~\ref{thm:homogeneous_bautin}, this provides us with \emph{sufficient} conditions for having a center, not necessary ones.
	
\begin{proof}
Consider equation \eqref{eq:non_homogeneous_system} with only two homogeneous nonlinearities of fixed degrees $k$ and $n$, with $k < n$. The Lyapunov equation then reduces to 
		
\begin{eqnarray}\label{eq:lyapunov_formula2}
\dot{V}(x,y) &=& \phantom{+}((V_2)'_x+(V_3)'_x+\cdots)(-y+F_{k}+ F_n) \nonumber\\
  			& & +((V_2)'_y+(V_3)'_y+\cdots)(\phantom{-}x+G_{k}+G_n) \nonumber\\
			&=& \phantom{+}(x+(V_3)'_x+\cdots)(-y+F_{k})+(x+(V_3)'_x+\cdots)F_n \nonumber\\
			& & +(y+(V_3)'_y+\cdots)(\phantom{-}x+G_{k})+(y+(V_3)'_y+\cdots)G_n \nonumber\\
			&=& L_{1}(x^{2}+y^{2})^{2}+L_{2}(x^{2}+y^{2})^{3}+L_{3}(x^{2}+y^{2})^{4}+\cdots.
\end{eqnarray}	
			
By following the proof of Proposition \ref{prop:lyapunov_gaps}, we see that $V_{k+1}$ does not depend on the higher order nonlinearities $(F_n,G_n)$:
\begin{equation}\label{eq:mid_degreek}
x(V_{k+1})'_y -y(V_{k+1})'_x + xF_k + yG_k	=\left\{
\begin{array}{lcl}
L_{(k-1)/2}(x^{2}+y^{2})^{(k+1)/2} &\quad& \text{if } k+1 \text{ is even},\\
				0                  &\quad& \text{if } k+1 \text{ is odd.}
\end{array}
\right.
\end{equation} 
Indeed, since $xF_k + yG_k$ is a linear expression in terms of the parameters $(f_k,g_k)$, so is the term $V_{k+1}$. Furthermore, we have $v_{k+1}=v_{k+1}^h$, i.e., $U_{k+1} = 0$.
			
In contrast, the term $V_{n+1}$ will have a more complicated dependence on the parameters of the system. Here, we will have to consider contributions from the terms
$[(V_{n+2-k})'_xF_k+(V_{n+2-k})'_yG_k]$ that are of degree $n+1$. By Theorem \ref{thm:homogeneous_bautin}, we have
$$
[(V_{n+2-k})'_xF_k+(V_{n+2-k})'_yG_k]=P_{n+2-k}(f,g,x,y)\cdot v^h_{k+1}.
$$ 		

We will now show that the coefficients of the Lyapunov function and the Lyapunov constants appearing in \eqref{eq:lyapunov_formula2} are linear combinations of $v^h_{k+1}$ and $v^h_{n+1}$.

		Taking equation \eqref{eq:long_eq}, with the Lyapunov formula \eqref{eq:lyapunov_formula2}, if $m(n-1)$ is odd and there exists $\bar{m}$ such that $m(n-1)=\bar{m}(k-1)$, we can rearrange and simplify equation \eqref{eq:general_case} as    
		\begin{eqnarray}\label{eq:long_eq1}
			y(V_{2 + m(n-1)})'_x - x(V_{2 + m(n-1)})'_y &=& (V_{2 + (m-1)(n-1)})'_xF_n + (V_{2 + (m-1)(n-1)})'_yG_n \nonumber\\
			& & +(V_{2 + (\bar{m}-1)(k-1)})'_xF_k + (V_{2 + (\bar{m}-1)(k-1)})'_yG_k \nonumber\\
			&=& P_{m}(f,g,x,y) \cdot v^h_{n+1}+P_{\bar{m}}(f,g,x,y) \cdot v^h_{k+1},
		\end{eqnarray}
		and it is clear that $V_{2 + m(n-1)}$ is a linear combination of $v^h_{n+1}$ and $v^h_{k+1}$. If no such $\bar{m}$ exists, then $V_{2 + m(n-1)}$ is a linear combination of $v^h_{n+1}$ only.

		When $m(n-1)$ is even, we can repeat the above, but we also solve for the next Lyapunov constant:
		\begin{eqnarray}\label{eq:long_eq2}
			L_{m\tfrac{n-1}{2}}(x^{2} &+& y^{2})^{m\tfrac{n-1}{2}+1} + y(V_{2 + m(n-1)})'_x - x(V_{2 + m(n-1)})'_y\nonumber\\
			&=& (V_{2 + (m-1)(n-1)})'_xF_n + (V_{2 + (m-1)(n-1)})'_yG_n \nonumber\\
			& & (V_{2 + (\bar{m}-1)(k-1)})'_xF_k + (V_{2 + (\bar{m}-1)(k-1)})'_yG_k\nonumber\\
			&=& P_{m}(f,g,x,y) \cdot v^h_{n+1} + P_{\bar{m}}(f,g,x,y) \cdot v^h_{k+1}.
		\end{eqnarray}
		
		It follows that both $V_{2 + m(n-1)}$ and $L_{m\tfrac{n-1}{2}}$ are linear combinations of $v^h_{n+1}$ and $v^h_{k+1}$. 

Adding an additional (third) nonlinearity of degree $j$, where $j\in\{2,\dots,n\}$ and $j \neq k$, $j \neq n$, the procedure is analogous to the one described above. Iterating this process for each degree up to $n$, we conclude that every coefficient of the Lyapunov function and Lyapunov constant can be expressed as a linear combination of $v^h_{3},\; v^h_4, \cdots, \; v^h_{n+1}$. This concludes the theorem.
\end{proof}

We conclude this section by describing the type of centers that we capture by setting $v^h_3 = \dots = v^h_{n+1} = 0$. It should come as no surprise that they too are of Darboux type.

\begin{proposition}\label{prop:teodarbgen}
	Centers defined by $v^h_3=v^h_4=\cdots=v^h_{n+1}=0$  correspond to Darboux's centers.
\end{proposition}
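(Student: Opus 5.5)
The plan is to imitate the proof of Proposition~\ref{prop:darboux_centers}, one homogeneous degree at a time. First I will show that the conditions $v^h_3=\dots=v^h_{n+1}=0$ force $xF_k+yG_k=0$ for every $k=2,\dots,n$.

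By \eqref{eq_homogeneous_V_terms}, $v^h_{k+1}$ is the $V_{k+1}$ obtained from the homogeneous system of degree $k$ alone. It is therefore determined by equation \eqref{eq:mid_degreek}:
\[
x(V_{k+1})'_y-y(V_{k+1})'_x=-(xF_k+yG_k)\ (+L_{(k-1)/2}(x^2+y^2)^{(k+1)/2}\text{ if $k$ odd}).
\]
If $v^h_{k+1}=0$, the left-hand side vanishes, so $xF_k+yG_k$ is a multiple of $(x^2+y^2)^{(k+1)/2}$. For $k$ even it is zero outright. For $k$ odd I will use the same convention as Theorem~\ref{thm:homogeneous_bautin}: in the Bautin enclosure of Theorem~\ref{thm:non_homogeneous_bautin}, the relevant constant is one of the $L_j$ that must vanish for a center. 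So I will take $xF_k+yG_k=0$ for all $k$, adding the vanishing of the corresponding $L_{(k-1)/2}$ where needed, just as in Proposition~\ref{prop:darboux_centers}. Pinning down this parity issue is the step I expect to be delicate.

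With that in hand, summing over $k$ gives $x\dot x+y\dot y=-xy+xy+\sum_k(xF_k+yG_k)=0$. Hence $\gamma_1=x^2+y^2$ is invariant with cofactor $K_1=0$, and every orbit lies on a circle. This already gives a center, since the linear part rotates and circles are closed curves not containing other equilibria near the origin.

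To exhibit the Darboux structure explicitly, I factor each degree as in Proposition~\ref{prop:darboux_centers}: $F_k=y\tilde F_k$ and $G_k=-x\tilde F_k$. Setting $\Phi=\sum_{k=2}^n\tilde F_k$, the system becomes
\[
\dot x=-y(1-\Phi),\qquad \dot y=x(1-\Phi).
\]
Then $\gamma_2=1-\Phi$ is an invariant algebraic curve with cofactor $K_2=y\Phi'_x-x\Phi'_y$, which equals the divergence; the computation is identical to the homogeneous one with $\tilde F_n$ replaced by $\Phi$. Darboux's theorem with $\lambda_2=-1$ and arbitrary $\lambda_1$ then gives the integrating factor $R=(x^2+y^2)^{\lambda_1}/(1-\Phi)$ and the first integral $H$ as a function of $x^2+y^2$. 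This holds on a neighborhood of the origin where $1-\Phi\neq0$.

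The main obstacle is the first step. For odd $k$, $v^h_{k+1}=0$ alone only gives $xF_k+yG_k=c\,(x^2+y^2)^{(k+1)/2}$. I would handle this either by including those constants, as the odd case of Theorem~\ref{thm:homogeneous_bautin} does, or by arguing that the center hypothesis forces the lowest nonzero such $c$ to be a nonvanishing Lyapunov constant, unless it cancels. If that fails, the fallback is to keep the radial part $x\dot x+y\dot y=\sum_k c_k(x^2+y^2)^{(k+1)/2}$, observe that it is a function of $r^2$ only, and note that the center assumption then forces all $c_k=0$.
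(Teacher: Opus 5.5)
Your proposal is correct and follows essentially the paper's proof: you deduce $xF_k+yG_k=0$ degree by degree from $v^h_{k+1}=0$, factor $F_k=y\tilde F_k$, $G_k=-x\tilde F_k$, and use $\gamma_1=x^2+y^2$ (cofactor $0$) and $\gamma_2=1-\sum_k\tilde F_k$ (cofactor equal to the divergence) to obtain the Darboux integrating factor $(x^2+y^2)^{\lambda_1}/(1-\sum_k\tilde F_k)$. The one real difference is the odd-degree step, where the paper only asserts that $V_{k+1}=0$ ``also yields'' $L_{(k-1)/2}=0$ (which its own proof of Theorem~\ref{thm:homogeneous_bautin} and its Appendix show is not automatic), whereas your fallback actually closes this step and should be your main argument rather than adding the $L_{(k-1)/2}$ as extra hypotheses: the radial equation $\dot r=\sum_k c_k r^k$ depends on $r$ alone, so any nonzero $c_k$ makes $r$ strictly monotone near the origin and rules out a center.
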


\begin{proof}
	
Following the proof of Theorem \ref{thm:non_homogeneous_bautin}, we begin with a differential system having homogeneous nonlinearities of only two fixed degrees $k$ and $n$, $k<n$. By rearranging \eqref{eq:mid_degreek} we have
\begin{equation}\label{eq:mid_degreek2}
xF_k + yG_k	=\left\{
\begin{array}{lcl}
L_{(k-1)/2}(x^{2}+y^{2})^{(k+1)/2}+y(V_{k+1})'_x -x(V_{k+1})'_y &\quad& \text{if } k+1 \text{ is even},\\
y(V_{k+1})'_x -x(V_{k+1})'_y                  &\quad& \text{if } k+1 \text{ is odd.}
\end{array}\right.
\end{equation} 
Noting that $V_{k+1}^h=V_{k+1}$, the assumtion $v_{k+1}^h=0$ translates into $V_{k+1} = 0$
(which also yields $L_{(k-1)/2}=0$ for $k$ odd), and it follows from \eqref{eq:mid_degreek2} that $xF_k + yG_k=0$.	

As in Proposition \ref{prop:darboux_centers}, we find that $G_k = x\tilde{G}_k = -x\tilde{F}_k$, and the associated homogeneous system can be expressed as
\begin{equation}\label{eq:darbouxgen2}
\left\{
\begin{array}{l} \displaystyle
\dot{x}= -y(1 - \tilde{F}_k),\\ 
\dot{y}= \phantom{-}x(1 - \tilde{F}_k).
\end{array}\right.
\end{equation}

Turning to the higher (degree-$n$) nonlinearity, using \eqref{eq:long_eq1} and \eqref{eq:long_eq2} with $m=1$, together with the already established $V_{k+1}^h=V_{k+1}=0$, we have
\begin{equation}\label{eq:mid_degreek3}
	xF_n + yG_n	=P_{1}(f,g,x,y) \cdot v^h_{n+1}.
\end{equation} 
Since we are assuming that $V_{n+1}^h=0$, it follows that $xF_n + yG_n=0$. By the same reasoning as in the proof of Proposition~\ref{prop:darboux_centers} we have $G_n = x\tilde{G}_n = -x\tilde{F}_n$, and our differential system can be simplified to
\begin{equation}\label{eq:darbouxgen3}
	\left\{
	\begin{array}{l} \displaystyle
		\dot{x}= -y(1 - \tilde{F}_k- \tilde{F}_n),\\ 
		\dot{y}= \phantom{-}x(1 - \tilde{F}_k- \tilde{F}_n).
	\end{array}\right.
\end{equation}

Repeating the previous process for nonlinearities of all remaining degrees, we can express the associated system for $V^h_3=V^h_4=\cdots=V^h_{n+1}=0$ as follows:   
\begin{equation}\label{eq:darbouxsimk}
	\left\{
	\begin{array}{l} \displaystyle
		\dot{x}=-y(1 -\tilde{F}_2 -\tilde{F}_3 \dots -\tilde{F}_n),\\ 
		\displaystyle		\dot{y}=\phantom{+}x(1 -\tilde{F}_2 -\tilde{F}_3 \dots -\tilde{F}_n).
	\end{array}\right.
\end{equation}	
Similarly to Proposition \ref{prop:darboux_centers}, we can find invariant algebraic curves $\gamma_1 = x^2 + y^2$, with vanishing cofactor $K_1$, and $\gamma_2=1- \tilde{F}_2- \tilde{F}_3- \dots - \tilde{F}_n$ with cofactor $K_2 = y(\tilde{F}_2+\tilde{F}_3+ \dots +\tilde{F}_n)'_x - x(\tilde{F}_2+\tilde{F}_3+ \dots +\tilde{F}_n)'_y$.
As in Proposition~\ref{prop:darboux_centers}, for $\lambda_1\in\R$, we have the following first integrals
$$
H=-\frac{(x^2 + y^2)^{\lambda_1 + 1}}{2(\lambda_1 + 1)} \quad (\lambda_1 \neq -1), \qquad H=\tfrac{1}{2}\ln(x^2 + y^2) \quad (\lambda_1 = -1),
$$
and the origin for \eqref{eq:darbouxsimk} is a Darboux center.

\end{proof}


\section{Conclusions}\label{sec:conclusions}
	
We have established concrete enclosures of the Bautin ideal for two families of planar polynomial differential systems of degree $n$, having a center-focus type fixed-point at the origin. The key idea is to move from a system's original parameters to the coefficients of the homogeneous terms of the associated Lyapunov function. The obtained description provides us with (sufficient) center conditions for the families.

A clear advantage of our approach is that the results are valid for all degrees. Also, the simple representation provides a concrete pathway to verify the existence of a center of Darboux-type. A shortcoming is that we do not get an exact description of the Bautin ideal, so there are center conditions that we cannot account for. This restriction prevents us from exploring carefully crafted systems with (potentially) large cyclicity. Indeed, describing the cyclicity of general centers is still an open problem for all degrees larger than two, and it is clear that such cases require a much more detailed analysis.

\subsection{Acknowledgments}
	
Both authors were supported by the grant ARC DP220100492. In addition, the first author was partially supported by the grants PDSE-CAPES 88881.624523/2021-01 and DS-CAPES 88882.386238/2019-01. We are grateful to Prof. Joan Torregrosa from Universitat Autònoma de Barcelona, for all the fruitful conversations in refining this theory.

{\footnotesize  ``Elemental does not mean trivial.'' Prof. Jes\'us Hernando P\'erez, Pelusa.}
	
\bibliographystyle{abbrv}

\bibliography{refs}
	

\textbf{Appendix}

Here we explain in more detail why -- for odd degrees $n$ -- the coefficients of $V_{n+1}$ do not appear in the first non-trivial Lyapunov constant $L_{(n-1)/2}$. We will illustrate the underlying mechanism for the case $n=3$; then \eqref{eq:mid_degree} becomes
\begin{equation*}
x(V_4)'_y - y(V_4)'_x + xF_3 + yG_3	= L_1(x^{2}+y^{2})^{2}, 
\end{equation*} 
which can be rearranged into 
\begin{equation}\label{eq:mid_degree_n_is_three}
L_1(x^{2}+y^{2})^{2} - x(V_4)'_y + y(V_4)'_x = xF_3 + yG_3. 
\end{equation} 
Matching the terms on both sides of \eqref{eq:mid_degree_n_is_three} produces a linear system of five equations in six unknowns: $v_{4,0},\dots,v_{0,4}$, and $L_1$: 
\begin{displaymath}
\begin{pmatrix}
0 &-1 & 0 & 0 & 0 & 1  \\
4 & 0 &-2 & 0 & 0 & 0  \\
0 & 3 & 0 &-3 & 0 & 2  \\
0 & 0 & 2 & 0 &-4 & 0  \\
0 & 0 & 0 & 1 & 0 & 1  
\end{pmatrix} 
\begin{pmatrix}
v_{4,0} \\
v_{3,1} \\
v_{2,2} \\
v_{1,3} \\
v_{0,4} \\
L_1 
\end{pmatrix}
=
\begin{pmatrix}
f_{3,0} \\
f_{2,1} + g_{3,0} \\
f_{1,2} + g_{2,1} \\
f_{0,3} + g_{1,2} \\
g_{0,3} 
\end{pmatrix}.
\end{displaymath}
We can split this system into two smaller ones. The equations for $L_1$, $v_{3,1}$ and $v_{1,3}$ are
\begin{displaymath}
\begin{pmatrix}
-1 & 0 & 1 \\
 3 &-3 & 2 \\
 0 & 1 & 1 \\
\end{pmatrix} 
\begin{pmatrix}
v_{3,1} \\
v_{1,3} \\
L_1
\end{pmatrix}
=
\begin{pmatrix}
f_{3,0} \\
f_{1,2} + g_{2,1} \\
g_{0,3} 
\end{pmatrix}.
\end{displaymath}
Seeing that the appearing matrix is non-singular, this always produces a unique solution. Here it also becomes clear that $L_1$ is completely decoupled from the remaining three coefficients of $V_4$. The (underdetermined) equations for $v_{4,0}, v_{2,2}$ and $v_{0,4}$ are:
\begin{displaymath}
\begin{pmatrix}
 4 &-2 & 0 \\
 0 & 2 &-4 \\
\end{pmatrix} 
\begin{pmatrix}
v_{4,0} \\
v_{2,2} \\
v_{0,4} 
\end{pmatrix}
=
\begin{pmatrix}
f_{2,1} + g_{3,0} \\
f_{0,3} + g_{1,2} 
\end{pmatrix},
\end{displaymath}
so we have a one-parameter family of solutions, say $\big(v_{4,0}(t), v_{0,4}(t)\big)$, where the real-valued parameter $t$ takes the role of $v_{2,2}$. By fixing the parameter $t$ (and hence $v_{2,2}$), we have unique solutions for $v_{4,0}$ and $v_{0,4}$, and therefore also for $V_4$. This was used in the proof of Proposition~\ref{prop:lyapunov_gaps}, where we selected $t = 0$ throughout.  

It is clear that the above situation will occur for all odd degrees $n$. For such $n$, \eqref{eq:mid_degree} will produce a system $n+2$ linear equations for the $n+3$ unknowns (the $n+2$ coefficients of $V_{n+1}$ together with $L_{(n-1)/2}$). As we have just seen, this system can be decoupled into two independent systems, one with a unique solution, and one which is underdetermined, producing a one-parameter family of solutions. Depending on the parity of $(n+1)/2$, a natural\footnote{When $(n+1)/2 = 2m+1$, the equation $-v_{2m,2m+2} + v_{2m+2,2m} = (f_{2m, 2m+1} + g_{2m+1,2m})/(2m+2)$ provides the parametrization needed to make the full system uniquely solvable.} choice for the real-valued parameter is to take 
\begin{equation}\label{eq:free_parameter}
t  =		
\left\{
\begin{array}{lcl}
v_{2m,2m}   &\quad& \text{if } (n+1)/2 = 2m,\\
v_{2m,2m+2} &\quad& \text{if } (n+1)/2 = 2m + 1.
\end{array}
\right.
\end{equation}

\end{document}